\numberwithin{equation}{section}
\theoremstyle{plain}
\newtheorem{thm}{Theorem}
\newtheorem{prop}{Proposition}
\newtheorem*{kf}{Key Fact}
\theoremstyle{remark}
\newcommand{\Fbar}{{\overline{F}}}
\newcommand{\Z}{\mathbb{Z}}
\newcommand{\Q}{\mathbb{Q}}
\newcommand{\C}{\mathbb{C}}
\newcommand{\Ql}{{\mathbb{Q}_\ell}}
\newcommand{\Qp}{{\mathbb{Q}_p}}
\newcommand{\Fp}{{\mathbb{F}_p}}
\renewcommand{\O}{\mathcal{O}}
\newcommand{\m}{\mathfrak{m}}
\newcommand{\tensor}{\otimes}
\newcommand{\GL}{\mathrm{GL}}
\DeclareMathOperator{\gal}{Gal}
\DeclareMathOperator{\im}{Im}
\DeclareMathOperator{\codim}{codim}
\begin{document}
\title{Conductors of $\ell$-adic representations}
\author{Douglas Ulmer}
\address{School of Mathematics \\ 
Georgia Institute of Technology\\ 
Atlanta, GA 30332}
\email{ulmer@math.gatech.edu}

\date{\today}

\subjclass[2010]{11F80}

\begin{abstract}
  We give a new formula for the Artin conductor of an $\ell$-adic
  representation of the Weil group of a local field of residue
  characteristic $p\neq\ell$.
\end{abstract}

\maketitle

\section{Introduction}
Our aim in this note is to give a new formula for the Artin conductor
of an $\ell$-adic representation of the Galois group of a
non-archimedean local field of residue characteristic $p\neq\ell$, see
Theorem~\ref{thm:main} in Section~\ref{s:main}.  The proof that this
formula is equivalent to the standard one is a simple unwinding of the
definitions, but the new formula has the virtue that it does not
require any auxiliary constructions such as semi-simplification or
Weil-Deligne representations, and it works equally well for
representations with finite or infinite inertial image.  We also take
the occasion to correct a typographical error in a canonical
reference. 

I would have imagined that formula \eqref{eq:int} is well known, and I
make no claim of priority, but a proof that it is equivalent to other
definitions does not seem to appear in in the
literature.\footnote{The formula appears as a definition in
  \cite[\S2.1]{DarmonDiamondTaylor97} and in the notes
  \cite{WieseGR}, but without proof that it is equivalent to the
  standard definition.}  We hope having a complete treatment will be
useful to the community.

Here is a brief explanation of the main issue: Let $F$ be a local
field with Weil group $W_F$.  The conductor of a representation $\rho$
of $W_F$ depends only on the restriction of $\rho$ to the inertia
group $I_F$, and it is defined in the first instance in
\cite{Artin31b} only for representations which factor through finite
quotients of $W_F$.  Since the image of an $\ell$-adic representation
restricted to inertia need not be finite, further discussion is
required.  This problem was solved by Serre, who in \cite{Serre70}
gave a definition of the conductor of an $\ell$-adic representation
using the key fact that there is a subgroup of inertia of finite index
on which the representation is unipotent.  This last fact was
conjectured by Serre and Tate and proven by Grothendieck, see
\cite[Appendix]{SerreTate68}.  Later, Deligne gave a definition using
what is now known as a Weil-Deligne representation, a technical device
with several uses in the Langlands program, see \cite{Deligne73} and
\cite{Tate79}.

For the convenience of the reader, we discuss the previous definitions
of the conductor (Artin's definition in Sections~\ref{s:Galois} through
\ref{s:ArtinConductor}, Serre's in Sections~\ref{s:l-adic} and
\ref{s:SerreConductor}, and Deligne's in Sections~\ref{s:WD} and
\ref{s:l-WD}) with full definitions, but minimal proofs.  In
Section~\ref{s:main} we give the new formula, and in
Section~\ref{s:twist} we give two applications, one of which motivated
this work.

We thank Romyar Sharifi and Emmanuel Kowalski for pointing out
\cite{WieseGR} and \cite{DarmonDiamondTaylor97} respectively, in both
cases after we re-discovered the formula in Theorem~\ref{thm:main}.
We also thank David Rohrlich for his comments and encouragement,
Jean-Pierre Serre for clarifying the history of the subject, and an
anonymous referee for several valuable suggestions.

\section{Galois groups and representations}\label{s:Galois}
Throughout the paper, $F$ will be a non-archimedean local field with
residue field of characteristic $p$ and cardinality $q$.  In other
words, $F$ is a finite extension of either $\Qp$ or $\Fp((t))$.  Let
$\Fbar$ be a separable closure of $F$, $G_F$ the Galois group
$\gal(\Fbar/F)$, and $\Phi$ a geometric Frobenius element, i.e., an
element of $G_F$ which induces the inverse of the $q$-power Frobenius
automorphism of the residue field of $\Fbar$.  Let $W_F$ be the Weil
group of $F$, the subgroup of $G_F$ inducing integral powers of the
$q$-power Frobenius on the residue field, and let $I_F$ be the inertia
subgroup of $G_F$, the subgroup acting as the identity on the residue
field.  We give $W_F$ the topology characterized by the requirement
that $I_F$ be an open subgroup. 
%(This is weaker than the topology on
%$W_F$ induced by that of $G_F$.)

Let $\ell$ be a prime number distinct from $p$ and let $E$ be a finite
extension of $\Ql$, the $\ell$-adic numbers.  An $\ell$-adic
representation with values in $E$ is a continuous homomorphism
$$\rho_\ell:W_F\to\GL_n(E)$$
where $\GL_n(E)$ is given the topology induced by the metric
($\ell$-adic) topology on $E$.

We are concerned with (the exponent of) the Artin conductor of
$\rho_\ell$, which we denote $a(\rho_\ell)$ and call simply the
conductor.  It will be defined in Section~\ref{s:SerreConductor}
below.

\section{Ramification groups}\label{s:ram}
In this section, we review the lower and upper ramification
filtrations on Galois groups.  See \cite[Ch.~IV]{SerreLF} for more
details.

Let $K$ be a finite Galois extension of $F$ with group $G=\gal(K/F)$.
We write $\O_K$ for the ring of integers of $K$, $\pi_K$ for a
generator of the maximal ideal of $\O_K$, and $v_K$ for the valuation
of $K$ with $v_K(\pi_K)=1$.

The ramification filtration on $G$ in the lower numbering is defined
by the requirement that
$$\sigma\in G_i\quad\Longleftrightarrow\quad v_K(\sigma(x)-x)\ge
i+1\quad\forall x\in \O_K$$
for $i$ an integer $\ge-1$.  Clearly $G_{-1}=G$, $G_0$ is the inertia
subgroup of $G$, and $G_i=0$ for all sufficiently large $i$.  By
convention, if $r\ge-1$ is a real number, we set $G_r=G_i$ where $i$
is the smallest integer $\ge r$.

Let $\varphi:[-1,\infty)\to[-1,\infty)$ be the continuous, piecewise
linear function with $\varphi(-1)=-1$, slope $1$ on $[-1,0)$, and
slope $1/[G_0:G_i]$ on $(i-1,i)$.  Let $\psi=\varphi^{-1}$,
the inverse function.  The upper numbering of the ramification
filtration on $G$ is given by
$$G^s=G_{\psi(s)}\qquad\text{and}\qquad G^{\varphi(r)}=G_r.$$
Note that the breaks in the upper numbering (i.e., the values $s$ so
that $G^{s+\epsilon}\neq G^s$ for all $\epsilon>0$) are in general
rational numbers, not necessarily integers.

The upper numbering is adapted to quotients in the following sense:
if $L/F$ is a Galois extension with $L\subset K$ and $H=\gal(L/F)$, so
that $H$ is a quotient of $G$, then the upper numbering satisfies
$$H^s=\im(G^s\to H).$$

This property allows us to define a ramification filtration on
$G_F=\gal(\Fbar/F)$ by declaring that
$$G_F^s=\left\{\sigma\in G_F\,\left|\, 
\sigma_{|_K}\in\gal(K/F)^s\ \forall K\right.\right\}$$
where $K$ runs through all finite Galois extensions of $F$.
Clearly we have $G_F^{-1}=G_F$ and $G_F^0=I_F$.  

We define
$$G_F^{>0}=\cup_{\epsilon>0}G_F^\epsilon$$ 
where the union is over all positive real numbers $\epsilon$.  We also
write $P_F$ for $G_F^{>0}$ and call this the wild inertia group of
$F$.  It is known to be a pro-$p$ group, and the quotient $I_F/P_F$ is
isomorphic as a profinite group to $\prod_{\ell\neq p}\Z_\ell$.

\section{Artin Conductor}\label{s:ArtinConductor}
In this section we review the definition of the Artin conductor of a
representation of $G=\gal(K/F)$ where $K/F$ is a finite Galois
extension.  The standard reference for this material is
\cite[Ch.~VI]{SerreLF}.

Let $\rho:G\to\GL_n(E)$ be a representation where $E$ is a field of
characteristic zero.  We write $V$ for the space where $\rho$ acts,
namely $E^n$, and for a subgroup $H$ of $G$ we write $V^H$ for the
invariants under $H$:
$$V^H=\left\{v\in V\left|\rho(h)(v)=v\ \forall h\in H\right.\right\}.$$

Recall the ramification subgroups $G_i$ of the previous section.  For
a subspace $W$ of $V$, we write $\codim W$ for the codimension of $W$
in $V$, i.e., $\dim V-\dim W$.  Following Artin \cite{Artin31b}, we
define the \emph{Artin conductor} of $\rho$ as
\begin{equation}\label{eq:conductor}
a(\rho):=\sum_{i=0}^\infty\frac{\codim V^{G_i}}{[G_0:G_i]}.
\end{equation}
Note that this is in fact a finite sum and that it depends only on the
restriction of $\rho$ to $G_0$, the inertia subgroup of $G$.  It true
but not at all obvious that $a(\rho)$ is an integer; see \cite[Ch.~VI,
\S2, Thm.~$1'$]{SerreLF}.

Because the definition of $a(\rho)$ depends only on $\rho$ restricted
to inertia, we may extend it to representations $\rho$ which are only
assumed to have finite image after restriction to inertia.

We give two alternate expressions for $a(\rho)$ which will be useful
in what follows.  First, we have
$$a(\rho)=\int_{-1}^\infty\frac{\codim V^{G_r}}{[G_0:G_r]}\,dr$$
because the integrand is constant on intervals $(i-1,i)$ and the
corresponding Riemann sum for the integral is exactly the sum defining
$a(\rho)$.  Second,
\begin{equation*}%\label{eq:cond-upper}
a(\rho)=\int_{-1}^\infty{\codim V^{G^s}}\,ds.
\end{equation*}
This follows from the previous expression and the definition of the
function $\varphi$ relating the upper and lower numberings.  Indeed,
if $s=\varphi(r)$, then $ds=\varphi'\,dr$ and
$\varphi'(r)=1/[G_0:G_i]$ for $r\in(i-1,i)$.

This last formula for $a(\rho)$ turns out to be useful as it
generalizes without change to $\ell$-adic representations.  It also
makes evident the fact that if $\rho$ factors through $H=\gal(L/F)$
for some subextension $L\subset K$, then the conductor of $\rho$ as a
representation of $G$ is the same as the its conductor as a
representation of $H$.

For use later we note that the first term in the sum for $a(\rho)$ and
the first part of the integrals for it are all equal:
\begin{equation*}%\label{eq:epsilon}
\epsilon(\rho):=\int_{-1}^0{\codim V^{G^s}}\,ds=
\int_{-1}^0\frac{\codim V^{G_r}}{[G_0:G_r]}\,dr=
\codim V^{G_0}=
\codim V^{I_F}.
\end{equation*}

It is convenient to break $a(\rho)$ into two parts, $\epsilon(\rho)$
as above, and 
\begin{equation*}%\label{eq:delta}
\delta(\rho):=\int_0^\infty{\codim V^{G^s}}\,ds=
\int_{0}^\infty\frac{\codim V^{G_r}}{[G_0:G_r]}\,dr=
\sum_{i=1}^\infty\frac{\codim V^{G_i}}{[G_0:G_i]},
\end{equation*}
so that $a(\rho)=\epsilon(\rho)+\delta(\rho)$.   One calls
$\epsilon(\rho)$ the \emph{tame conductor} of $\rho$ and
$\delta(\rho)$ the \emph{wild conductor} or \emph{Swan conductor} of
$\rho$.

The Artin and Swan conductors may also be realized as the inner
products of the character of $\rho$ with those of certain
representations, known as the \emph{Artin representation} and the
\emph{Swan representation} respectively.  The existence of
representations with this property is essentially equivalent to the
fact that the Artin and Swan conductors are integers, see \cite[VI,
\S2]{SerreLF}.

\section{$\ell$-adic representations}\label{s:l-adic}
Recall that an $\ell$-adic representation is a continuous
homomorphism 
$$\rho_\ell:W_F\to\GL_n(E)$$
where $E$ is a finite extension of $\Ql$ and $\GL_n(E)$ is given the
$\ell$-adic topology.  (When $E\neq\Ql$, some authors refer to
$\rho_\ell$ as a $\lambda$-adic representation.)  A primary source of
such representations is $\ell$-adic cohomology.  More precisely, if
$X$ is a variety over $F$, then the $\ell$-adic \'etale cohomology
groups $H^i(X\times\Fbar,\Ql)$ (and variants) are equipped with
continuous actions of $G_F$ and we may restrict to $W_F$ to obtain
$\ell$-adic representations as defined above.

Let $\O_E$ be the ring of integers of $E$ and let $\m\subset\O_E$ be
the maximal ideal. It is well known (see, e.g.,
\cite[p.~104]{SerreLALG}) that every compact subgroup of $\GL_n(E)$ is
conjugate to a subgroup of $\GL_n(\O_E)$.  Since $I_F$ is a closed
subgroup of $G_F$, it is compact.  Thus replacing $\rho_\ell$ by a
conjugate representation if necessary, we may assume that the image of
$I_F$ under $\rho_\ell$ is contained in $\GL_n(\O_E)$.

Now the $\GL_n(\O_E)$ has a finite index subgroup
which is a pro-$\ell$ group, namely, the kernel of reduction
$\GL_n(\O_E)\to\GL_n(\O_E/\m)$.   Since $P_F\subset I_F$ is a pro-$p$
group and $\ell\neq p$, it follows that the image of $P_F$ under
$\rho_\ell$ is finite.

On the other hand, it is not in general the case that the image of all
of $I_F$ under $\rho_\ell$ is finite.  For example, if $\rho_\ell$ is
the representation of $G_F$ on the Tate module of an elliptic curve
over $F$ with split multiplicative reduction, it is known that the
restriction of $\rho_\ell$ to $P_F$ is trivial, and in a suitable
basis the restriction of
$\rho_\ell$ to $I_F$ has the form
$$\left(\begin{matrix}1&\tau\\0&1\end{matrix}\right)$$
where $\tau:I_F\to\Z_\ell$ is the projection $I_F\to
I_F/P_F\cong\prod_{r\neq q}\Z_r\to\Z_\ell$.  See
\cite[\S15]{Rohrlich94} for more details.

Since the restriction of $\rho_F$ to inertia need not have finite
image, the definition of Artin does not apply directly because the
denominators $[G_0:G_i]$ in \eqref{eq:conductor} need not be finite,
so further discussion is needed.

The crucial observation that makes it possible to define a conductor
for $\rho_\ell$ is the following, which was conjectured by Serre and
Tate and proven by Grothendieck.

\begin{kf}%[Grothendieck]
\label{thm:finite-index}
If $\rho_\ell:W_F\to\GL_n(E)$ is a continuous representation, then
there is a subgroup $I'\subset I_F$ of finite index such that
$\rho_\ell(g)$ is unipotent for every $g\in I'$.
\end{kf}

The proof is given in \cite[Appendix]{SerreTate68}.  It applies more
generally to discrete valuation fields $F$ whose residue field has the
property that no finite extension contains all roots of unity of
$\ell$-power order.  Note that we may assume that $I'$ is also normal
in $W_F$.

\section{Semi-simplification}\label{s:SerreConductor}
Fix an $\ell$-adic representation $\rho_\ell:W_F\to\GL_n(E)$.  We
write $V_\ell$ for $E^n$ with its action of $W_F$ via $\rho_\ell$.

Let $V_{ss}$ be the semi-simplification of $V_\ell$, defined as the
direct sum of the Jordan-H\"older factors of $V_\ell$ as a $W_F$
module, and let $\rho_{ss}:W_F\to\GL_n(E)$ be the corresponding
homomorphism.  

It follows from the key fact of the previous section that $\rho_{ss}$
is trivial on a finite index subgroup of $I_F$.  Indeed, by Clifford's
theorem \cite[Thm.~1]{Clifford37}, $\rho_{ss}$ is semi-simple when
restricted to any normal subgroup of $W_F$, and by Kolchin's theorem
\cite[V.3*, p.~35]{SerreLALG}, a semi-simple and unipotent
representation is trivial.  In particular, $\rho_{ss}(I_F)$ is a
finite group.  Therefore, the wild conductor $\delta(\rho_{ss})$ and
the Artin conductor $a(\rho_{ss})$ are well defined.

Following Serre \cite{Serre70}, we define
\begin{equation*}%\label{eq:S-cond}
a(\rho_\ell):=\epsilon(\rho_\ell)+\delta(\rho_{ss})
\end{equation*}
where as usual $\epsilon(\rho_\ell)=\dim V_\ell-\dim V_\ell^{I_F}$. 

Note that if $\rho_\ell$ restricted to inertia has finite image, then
$\rho_\ell$ and $\rho_{ss}$ have the same restriction to inertia, so
this definition agrees with that in Section~\ref{s:ArtinConductor}
when they both apply.

Note also that 
$$\epsilon(\rho_\ell)-\epsilon(\rho_{ss})=
\dim V_{ss}^{I_F}-\dim V_\ell^{I_F}$$
so we have
$$a(\rho_\ell)=\dim V_{ss}^{I_F}-\dim V_\ell^{I_F}+a(\rho_{ss}).$$
This last formula appears in \cite[4.2.4]{Tate79}, but is missing the
exponent $I_F$ on $V_\ell$.

The wild conductor $\delta(\rho_{ss})$ has two other useful
descriptions.  We noted above that $\rho_\ell|_{P_F}$ (the restriction
to wild inertia) has finite image, so is already semi-simple, and
$\rho_{ss}|_{P_F}$ is also semi-simple by the same reasoning (or by
Clifford's theorem).  Since $\rho_\ell|_{P_F}$ and $\rho_{ss}|_{P_F}$
have the same character, they are isomorphic and have the same wild
conductor.  For another description, choose a basis of $V_\ell$ so
that the image of $\rho_\ell$ lies in $\GL_n(\O_E)$ and let
$\overline{\rho}_\ell$ be the reduction modulo $\m_E$.  Then
$\overline{\rho}_\ell$ restricted to $P_F$ is isomorphic to
$\rho_\ell$ restricted to $P_F$ and also gives the same wild
conductor.  Summarizing:
\begin{equation*}%\label{eq:wild}
\delta(\rho_{ss})=\delta(\rho_\ell|_{P_F})=\delta(\overline{\rho}_\ell).
\end{equation*}

\section{Weil-Deligne representations}\label{s:WD}
In this section we review (with the minimum of details) the notion of
a Weil-Deligne representation.  See \cite[\S8]{Deligne73} or
\cite[4.1]{Tate79} for more details, and \cite{Rohrlich94} for a
motivated introduction aimed at arithmetic geometers.

We write $||\cdot||$ for the homomorphism $W_F\to\Q$ which sends
$\Phi$ to $q^{-1}$ and which is trivial on $I_F$.

Let $V$ be a vector space over a field of characteristic 0.  We define
a Weil-Deligne representation of $W_F$ on $V$ as a pair $(\rho,N)$
where $\rho:W_F\to\GL(V)$ is a homomorphism continuous with respect to
the discrete topology on $V$ and $N:V\to V$ is an endomorphism
satisfying
$$\rho(w)N\rho(w)^{-1}=||w||\,N.$$
Continuity of $\rho$ implies that is has finite image restricted to
inertia, and the displayed formula implies that $N$ is nilpotent
(because its eigenvalues are stable under multiplication by $q$).

Because $\rho$ has finite image when restricted to $I_F$, its
conductor is defined by the formulas of
Section~\ref{s:ArtinConductor}.  We define the Artin conductor of a
Weil-Deligne representation $(\rho,N)$ as
$$a(\rho,N):=a(\rho)+\dim V^{I_F}-\dim V^{I_F}_N.$$
Here $V_N$ is the kernel of $N$ on $V$, so that
$$V^{I_F}_N=\left\{v\in V\left| 
   N(v)=0, \rho(w)(v)=v\ \forall w\in I_F\right.\right\}.$$

\section{$\ell$-adic representations and Weil-Deligne
  representations}\label{s:l-WD} 
Fix an $\ell$-adic representation $\rho_\ell$.  The key fact stated at
the end of Section~\ref{s:l-adic} leads to a description of the
behavior of $\rho_\ell$ restricted to $I_F$ in terms of Weil-Deligne
representations.
 
First, note that because $P_F$ is a pro-$p$ group and
$I_F/P_F\cong\prod_{\ell\neq p}\Z_l$, there is a non-zero homomorphism
$t_\ell:I_F\to\Ql$ which is unique up to a scalar.  It satisfies
$t_\ell(w\sigma w^{-1})=||w||\,t_\ell(\sigma)$ for all $w\in W_F$.

The key fact implies that there is a unique nilpotent linear
transformation $N:E^n\to E^n$ such that for all $\sigma$ in some
finite index subgroup of $I_F$
$$\rho_\ell(\sigma)=\exp(t_\ell(\sigma)N)$$
as automorphisms of $E^n$.  Here $\exp$ is defined by the usual series
$1+x+x^2/2!+\cdots$ and $\exp(t_l(\sigma)N)$ is in fact a finite sum
because $N$ is nilpotent.

It follows from this (see \cite[\S8]{Deligne73}) that there exists a
unique Weil-Deligne representation $(\rho,N)$ on $V=E^n$ such that for
all $m\in\Z$ and all $\sigma\in I_F$
\begin{equation}\label{eq:WD}
\rho_\ell(\Phi^m\sigma)
=\rho(\Phi^m\sigma)\exp\left(t_\ell(\sigma)N\right).
\end{equation}

Conversely, given a Weil-Deligne representation $(\rho,N)$ on $V$, the
displayed formula defines an $\ell$-adic representation.  This
correspondence gives a bijection on isomorphism classes.  (The
correspondence $\rho_\ell\leftrightarrow(\rho,N)$ depends on the
choices of $t_\ell$ and $\Phi$, but after passing to isomorphism
classes it is independent of these choices, see \cite{Deligne73}.)

The point of introducing Weil-Deligne representations is that their
definition uses only the discrete topology on $V$, so is convenient for
shifting between different ground fields (such as $\Ql$ for varying
$\ell$ and $\C$).

Following Deligne \cite{Deligne73}, we define
$$a_D(\rho_\ell):=a(\rho,N)=a(\rho)+\dim V^{I_F}-\dim V^{I_F}_N.$$
We note that $\rho_\ell(I_F)$ is finite if and only if the
corresponding $N=0$, and in this case the definition above reduces to
that of Section~\ref{s:ArtinConductor}.

Since $\epsilon(\rho)=\dim V-\dim V^{I_F}$, we also have
$$a_D(\rho_\ell)=\dim V-\dim V^{I_F}_N+\delta(\rho).$$
We note also that $t_\ell$ is trivial on the wild inertia group
$P_F=G_F^{>0}$, so $\rho_\ell$ and $\rho$ are equal on $P_F$.  It follows
that $\delta(\rho)=\delta(\rho_\ell|_{P_F})=\delta(\rho_{ss})$.  On
the other hand, equation~\eqref{eq:WD} (with $m=0$) implies that
$V_N^{I_F}=V_\ell^{I_F}$.  Therefore
$$a_D(\rho_\ell)=\dim V-\dim V^{I_F}_\ell+\delta(\rho_\ell)=a(\rho_\ell),$$
in other words, the definitions of Deligne and Serre agree.

\section{Another formula for $a(\rho_\ell)$}\label{s:main}
We can now state the main result of this note.  The left hand side of
\eqref{eq:int} appears as a definition in \cite[Def.~3.1.27]{WieseGR}.
This reference seems to include everything needed to prove that
Definition~3.1.27 agrees with the definitions of Serre and Deligne,
but the proof is not given there.

\begin{thm}\label{thm:main}
  Let $\rho_\ell$ be an $\ell$-adic representation of $W_F$ on
  $V_\ell$ with corresponding Weil-Deligne representation $(\rho,N)$
  on $V$ and semisimplification $\rho_{ss}$ on $V_{ss}$.  Let
  $a(\rho_\ell)$ be Artin conductor of $\rho_\ell$, defined as in
  Section~\ref{s:SerreConductor}, and let $a_D(\rho_\ell)$ be defined
  as in Section~\ref{s:l-WD}.  Then
\begin{equation}\label{eq:int}
\int_{-1}^\infty \codim V_\ell^{G^s}\,ds=a(\rho_\ell)=a_D(\rho_\ell).
\end{equation}
\end{thm}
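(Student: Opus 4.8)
The plan is to prove the chain of equalities by establishing the left-hand equality $\int_{-1}^\infty \codim V_\ell^{G^s}\,ds=a(\rho_\ell)$, since the equality $a(\rho_\ell)=a_D(\rho_\ell)$ was already shown at the end of Section~\ref{s:l-WD}. The strategy is to split the integral at $s=0$ into a tame part on $[-1,0]$ and a wild part on $[0,\infty)$, and match each against the corresponding piece of Serre's definition $a(\rho_\ell)=\epsilon(\rho_\ell)+\delta(\rho_{ss})$.

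For the tame part, I would note that $G^s=G_F^s$ is $I_F$ for every $s\in[-1,0]$ by the discussion in Section~\ref{s:ram} (where $G_F^0=I_F$ and $G_F^{-1}=G_F$, and on $(-1,0)$ the filtration is still $I_F$). Wait --- actually $G_F^s=G_F$ for $s\in[-1,0)$ and $G_F^0=I_F$, so $\codim V_\ell^{G^s}=\codim V_\ell^{G_F}=0$ for $s\in[-1,0)$ and equals $\codim V_\ell^{I_F}$ at the single point $s=0$; hence $\int_{-1}^0 \codim V_\ell^{G^s}\,ds=0$? That cannot be right. Let me reconsider: in fact the normalization in Section~\ref{s:ArtinConductor} gives $\epsilon(\rho)=\int_{-1}^0 \codim V^{G^s}\,ds=\codim V^{I_F}$, so the convention must be that $G^s=I_F$ on the half-open interval $(-1,0]$ (equivalently, one uses left-continuity / the ``smallest integer $\ge r$'' convention pushed through $\varphi$). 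I would simply invoke that convention as already fixed in Section~\ref{s:ArtinConductor}, so that $\int_{-1}^0 \codim V_\ell^{G^s}\,ds=\codim V_\ell^{I_F}=\epsilon(\rho_\ell)$ with no semisimplification needed --- this is the whole point, since $\epsilon$ only sees $I_F$-invariants and those are unaffected by passing to $\rho_{ss}$ only in the combination that Serre's formula already uses $\rho_\ell$ itself.

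For the wild part $\int_0^\infty \codim V_\ell^{G^s}\,ds$, the key observation is that $G^s\subset P_F=G_F^{>0}$ for every $s>0$, and I showed in Section~\ref{s:SerreConductor} that $\rho_\ell|_{P_F}$ and $\rho_{ss}|_{P_F}$ are isomorphic (equal character, both semisimple on the pro-$p$ group $P_F$). Therefore $V_\ell^{G^s}$ and $V_{ss}^{G^s}$ have the same dimension for all $s>0$, giving $\int_0^\infty \codim V_\ell^{G^s}\,ds=\int_0^\infty \codim V_{ss}^{G^s}\,ds$. Now $\rho_{ss}$ has finite image on $I_F$, so the integral formula for the Artin conductor of a finite-image representation from Section~\ref{s:ArtinConductor} applies to $\rho_{ss}$, and its wild part is exactly $\delta(\rho_{ss})=\int_0^\infty \codim V_{ss}^{G^s}\,ds$. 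Adding the two pieces yields $\int_{-1}^\infty \codim V_\ell^{G^s}\,ds=\epsilon(\rho_\ell)+\delta(\rho_{ss})=a(\rho_\ell)$, and then $a(\rho_\ell)=a_D(\rho_\ell)$ by Section~\ref{s:l-WD}.

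The main obstacle is bookkeeping around the boundary point $s=0$ and the half-open-interval convention: I must be careful that the ``$\codim V_\ell^{G^s}$'' appearing on $(-1,0]$ really contributes $\epsilon(\rho_\ell)$ and not something involving $\rho_{ss}$, and that the wild integrand genuinely only involves subgroups of $P_F$ so the $\rho_\ell\leftrightarrow\rho_{ss}$ swap is legitimate there. Everything else is a direct unwinding: the only substantive inputs are (i) $\rho_\ell|_{P_F}\cong\rho_{ss}|_{P_F}$, already proved, and (ii) the integral formula for the Artin conductor of a finite-image representation, already recorded. So I do not expect a genuine difficulty, only the need to state the interval conventions precisely enough that the split at $0$ is unambiguous.
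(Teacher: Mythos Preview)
Your proposal is correct and follows essentially the same route as the paper: split the integral at $s=0$, identify the tame piece with $\epsilon(\rho_\ell)$, swap $\rho_\ell$ for $\rho_{ss}$ on the wild piece via $\rho_\ell|_{P_F}\cong\rho_{ss}|_{P_F}$, and quote $a(\rho_\ell)=a_D(\rho_\ell)$ from Section~\ref{s:l-WD}. Your hesitation on $(-1,0)$ is unnecessary: by the convention in Section~\ref{s:ram} that $G_r=G_{\lceil r\rceil}$ and the fact that $\varphi$ is the identity on $[-1,0]$, one has $G^s=G_s=G_0=I_F$ for every $s\in(-1,0)$, exactly as the paper states in one line.
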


\begin{proof}
  We saw at the end of Section~\ref{s:l-WD} that
  $a(\rho_\ell)=a_D(\rho_\ell)$, so we need only check that the
  integral is equal to $a(\rho_\ell)$.  For $-1<s<0$, $G^s=G_s=I_F$,
  so
$$\int_{-1}^0 \codim V_\ell^{G^s}\,ds=\codim V_\ell^{I_F}
=\epsilon(\rho_\ell).$$
On the other hand, for $s>0$, $G^s\subset P_F$ and $\rho_\ell$
restricted to $G^s$ is isomorphic to $\rho_{ss}$ restricted to $G^s$.
Therefore
$$\int_0^\infty \codim V_\ell^{G^s}\,ds=
\int_0^\infty \codim V_{ss}^{G^s}\,ds=
\delta(\rho_{ss}).$$
It follows that 
$$\int_{-1}^\infty \codim V_\ell^{G^s}\,ds=
\epsilon(\rho_\ell)+\delta(\rho_{ss})=a(\rho_\ell)$$ 
as desired.
\end{proof}

\section{An application to twisting}\label{s:twist}
We give an easy application of the theorem which is the motivation for
this work.

Let $\rho_\ell:W_F\to\GL_n(E)$ be an $\ell$-adic representation and
let $\chi:W_F\to E^\times$ be a character.  We say ``$\chi$ is more
deeply ramified than $\rho_\ell$'' if there exists a non-negative real
number $s$ such that $\rho_\ell(G_F^s)=\{id\}$ and
$\chi(G_F^s)\neq\{id\}$.  In other words, $\chi$ is non-trivial
further into the ramification filtration than $\rho_\ell$ is.  Let
$m$ be the supremum of the set of $s$ such that $\chi$ is non-trivial on
$G_F^{s}$.  It follows from Section~\ref{s:ArtinConductor} that
$a(\chi)=m+1$.

\begin{prop}\label{lemma:ramification}
  If $\chi$ is more deeply ramified than $\rho_\ell$, then
$$a(\rho_\ell\tensor\chi)=\deg(\rho_\ell)a(\chi).$$
\end{prop}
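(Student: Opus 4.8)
The plan is to use the new integral formula \eqref{eq:int} from Theorem~\ref{thm:main}, which is perfectly suited to this kind of statement because it expresses the conductor directly in terms of the ramification filtration without any passage to semisimplification or Weil--Deligne data. Write $n=\deg(\rho_\ell)$, let $V_\ell$ be the space of $\rho_\ell$, and let $V_\ell\tensor\chi$ denote the twisted representation. The formula gives
\begin{equation*}
a(\rho_\ell\tensor\chi)=\int_{-1}^\infty\codim\bigl(V_\ell\tensor\chi\bigr)^{G^s}\,ds.
\end{equation*}
The whole game is to compute the integrand. Let $m$ be the supremum of the $s$ with $\chi(G_F^s)\neq\{id\}$, so that by the remark preceding the proposition $a(\chi)=m+1$, and by hypothesis there is a non-negative $s_0$ with $\rho_\ell(G_F^{s_0})=\{id\}$ and $\chi(G_F^{s_0})\neq\{id\}$; since the filtration is decreasing this forces $\rho_\ell(G_F^s)=\{id\}$ for all $s\ge s_0$, and $s_0\le m$.

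The key step is the following dichotomy for the integrand, for $s>-1$. If $s\ge s_0$ then $G^s$ acts trivially via $\rho_\ell$, so on $V_\ell\tensor\chi$ the group $G^s$ acts by the scalar $\chi$; hence $(V_\ell\tensor\chi)^{G^s}$ is all of $V_\ell\tensor\chi$ when $\chi(G^s)=\{id\}$, i.e.\ when $s>m$, and is $\{0\}$ when $\chi(G^s)\neq\{id\}$, i.e.\ for $s_0\le s\le m$. In the latter case the codimension is $n$. If instead $-1<s<s_0$, then I claim $(V_\ell\tensor\chi)^{G^s}$ has the same dimension as $V_\ell^{G^s}$: indeed for such $s$ the group $G^s$ is nonabelian-ish but $\chi$ restricted to $G^s$ need not be trivial, so one must be a little careful. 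The clean way is to note that twisting by $\chi$ is an invertible operation (twist back by $\chi^{-1}$), and for any subgroup $H$ on which $\chi$ is trivial we have $(V_\ell\tensor\chi)^H=V_\ell^H\tensor\chi$; the problem is that $\chi$ need not be trivial on $G^s$ for $s<s_0$. So the honest statement to prove is: for every $s$, $\codim(V_\ell\tensor\chi)^{G^s}=\codim V_\ell^{G^s}$ \emph{whenever} $s<s_0$, and $=n$ whenever $s_0\le s\le m$, and $=0$ whenever $s>m$.

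To nail the range $-1<s<s_0$, the argument I would give is: on this range $\chi(G^s)\neq\{id\}$ is possible, but we can compare with the untwisted computation via the character of the restriction. Decompose $\rho_\ell|_{G^s}$ — more precisely its semisimplification, which suffices for computing dimensions of invariants since $\dim V^{H}$ for a semisimple representation equals the multiplicity of the trivial summand — into isotypic pieces; twisting by $\chi$ permutes the irreducible constituents, sending the trivial ones to copies of $\chi|_{G^s}$ and sending any constituent isomorphic to $\chi^{-1}|_{G^s}$ to a trivial one. Here is where the hypothesis does its work: because $\rho_\ell(G_F^{s_0})=\{id\}$ and $\chi(G_F^{s_0})\neq\{id\}$, the character $\chi^{-1}$ does \emph{not} occur in $\rho_\ell|_{G^s}$ for $s<s_0$ — if it did, then on the smaller subgroup $G^{s_0}$, where $\rho_\ell$ is trivial, $\chi^{-1}$ would also have to be trivial, contradicting $\chi(G_F^{s_0})\neq\{id\}$. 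Hence no constituent of $\rho_\ell|_{G^s}$ becomes trivial upon twisting by $\chi$ that was not already trivial, and symmetrically (twisting also can't destroy a trivial constituent unless $\chi$ itself is non-trivial on $G^s$, which only removes multiplicity, never adds it, and the same non-occurrence argument applied to the pair $(V_\ell^{\vee},\chi)$ handles it). Carrying this out cleanly gives $\dim(V_\ell\tensor\chi)^{G^s}=\dim V_\ell^{G^s}$ for $-1<s<s_0$. This comparison over the range $s<s_0$ is the step I expect to be the main obstacle, since one has to be honest about $\chi$ possibly being non-trivial there; everything else is bookkeeping.

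Granting the integrand computation, the rest is a one-line integral. We get
\begin{equation*}
a(\rho_\ell\tensor\chi)=\int_{-1}^{s_0}\codim V_\ell^{G^s}\,ds
+\int_{s_0}^{m}n\,ds+\int_m^\infty 0\,ds
=\int_{-1}^{s_0}\codim V_\ell^{G^s}\,ds + n(m-s_0).
\end{equation*}
But for $s<s_0$ we also have, from the untwisted formula and the fact that $\rho_\ell(G^s)=\{id\}$ for $s\ge s_0$ so $\codim V_\ell^{G^s}=0$ there,
\begin{equation*}
a(\rho_\ell)=\int_{-1}^\infty\codim V_\ell^{G^s}\,ds=\int_{-1}^{s_0}\codim V_\ell^{G^s}\,ds.
\end{equation*}
Hmm — that would give $a(\rho_\ell\tensor\chi)=a(\rho_\ell)+n(m-s_0)$, which is not quite the asserted identity unless $a(\rho_\ell)=n(s_0+1)$; and indeed, since $\rho_\ell(G^s)=\{id\}$ for all $s\ge s_0$, the integrand $\codim V_\ell^{G^s}$ is $0$ on $[s_0,\infty)$, but to get the proposition as stated one in fact wants the \emph{sharp} $s_0=m$, i.e.\ one should take $m$ itself to play the role of $s_0$ once one observes that $\rho_\ell(G^s)=\{id\}$ for all $s$ up to $m$ (which follows because $m<\infty$ only matters for $\chi$; the supremum $s_0$ of $s$ with $\rho_\ell(G_F^s)\neq\{id\}$ satisfies $s_0<$ every $s$ where $\chi$ is still non-trivial, hence $s_0\le m$, but actually the hypothesis "more deeply ramified" with the explicit witness can be improved to $s_0\le m$ and then one checks $\rho_\ell$ is trivial on $G^s$ for all $s>s_0$). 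So in the final assembly I would replace $s_0$ by $m$ throughout — legitimate since $\rho_\ell(G^s)=\{id\}$ for $m\ge s\ge s_0$ too, so those integrals of $\codim V_\ell^{G^s}$ vanish anyway — obtaining
\begin{equation*}
a(\rho_\ell\tensor\chi)=\int_{-1}^m n\,ds=n(m+1)=\deg(\rho_\ell)\,a(\chi),
\end{equation*}
using $a(\chi)=m+1$ and that $(V_\ell\tensor\chi)^{G^s}$ is everything for $s>m$ and zero for $-1<s<m$ (the latter because for $-1<s<s_0$ the comparison gives $\codim V_\ell^{G^s}$, which need \emph{not} be zero — so strictly the clean統 statement requires showing $a(\rho_\ell)$ contributes correctly; the resolution is that "more deeply ramified" as defined forces $\rho_\ell(G_F^s)=\{id\}$ already for the \emph{same} $s$ at which $\chi$ first becomes non-trivial, and by monotonicity for all larger $s$, so in fact $\rho_\ell$ is trivial on $G^s$ for every $s\ge 0$ at which we are integrating against a non-trivial $\chi$ — meaning $s_0=0$ in the worst case and the integrand $\codim V_\ell^{G^s}$ on $(-1,0)$ is just $\codim V_\ell^{I_F}$, contributing $\epsilon(\rho_\ell)$, which must then be shown to vanish or be absorbed). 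The correct reading, which I would verify first, is that the hypothesis gives $\rho_\ell(G_F^s)=\{id\}$ for \emph{some} $s$ and hence, by monotonicity, one may take that $s$ arbitrarily close to $0$ from above only if $\rho_\ell$ is tame — otherwise the proposition as stated would be false with a $+\epsilon(\rho_\ell)$ correction. I would therefore re-examine the precise hypothesis and most likely conclude that "more deeply ramified" is meant to imply $\rho_\ell(P_F)=\{id\}$ (tame or unramified $\rho_\ell$) plus the depth condition, under which $\epsilon(\rho_\ell)$ does not appear because $\codim V_\ell^{G^s}=0$ for $s>0$ and the $s\in(-1,0)$ contribution is matched on the twisted side by the scalar action of $\chi$ on all of $V_\ell$, giving $\codim(V_\ell\tensor\chi)^{G^s}=\codim V_\ell^{G^s}$ there as well — so that the two sides of \eqref{eq:int} for $\rho_\ell\tensor\chi$ versus the naive count differ by exactly $\int_0^m n\,ds$, and combined with $a(\rho_\ell)$'s tame part yields $n(m+1)$. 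Sorting out this boundary contribution is, together with the $s<s_0$ comparison, the delicate point; once the hypothesis is pinned down the computation collapses to the displayed integral.
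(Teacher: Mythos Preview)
Your starting point is right---the integral formula \eqref{eq:int}---but you then attempt a comparison that is actually false, and you miss the one-line observation that makes the paper's proof work.

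The error: for $-1<s<s_0$ you claim $\dim(V_\ell\tensor\chi)^{G^s}=\dim V_\ell^{G^s}$. This is wrong. Take $\rho_\ell$ unramified (so $V_\ell^{G^s}=V_\ell$ for all $s>-1$) and $\chi$ ramified; then $(V_\ell\tensor\chi)^{I_F}=0$, not $V_\ell$. Ironically, your own constituent argument proves the \emph{correct} statement: $\dim(V_\ell\tensor\chi)^{G^s}$ is the multiplicity of $\chi^{-1}|_{G^s}$ in $\rho_\ell|_{G^s}$, and since $\chi^{-1}|_{G^{s_0}}$ is non-trivial while $\rho_\ell|_{G^{s_0}}$ is trivial, that multiplicity is zero. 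So $(V_\ell\tensor\chi)^{G^s}=0$, not $V_\ell^{G^s}$. You mis-record this as an equality of dimensions, which manufactures the spurious $a(\rho_\ell)$ term you then spend the rest of the proposal trying to make disappear.

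The paper's argument avoids the case split entirely. From the hypothesis one has $s_0\le m$, hence $G_F^m\subset G_F^{s_0}$, so $\rho_\ell(G_F^m)=\{id\}$ while $\chi(G_F^m)\neq\{id\}$. Thus $G_F^m$ acts on $V_\ell\tensor\chi$ by a non-trivial scalar, and $(V_\ell\tensor\chi)^{G_F^m}=0$. Now for \emph{every} $s\le m$ the containment $G_F^s\supset G_F^m$ gives
\[
(V_\ell\tensor\chi)^{G_F^s}\subset(V_\ell\tensor\chi)^{G_F^m}=0.
\]
There is no separate range $(-1,s_0)$ to analyze: the integrand is $n$ on $(-1,m)$ and $0$ on $(m,\infty)$, and the integral is $n(m+1)=\deg(\rho_\ell)\,a(\chi)$.
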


\begin{proof}
  Let $V_\ell$ be the space where $W_F$ acts via $\rho_\ell$ and let
  $V_{\ell,\chi}$ be the same space where $W_F$ acts via
  $\rho_\ell\tensor\chi$.  By the theorem we have
$$a(\rho_\ell\tensor\chi)=\int_{-1}^\infty \codim V_{\ell,\chi}^{G_F^s}\,ds.$$
If $s\le m$ then $V_{\ell,\chi}^{G_F^s}\subset
V_{\ell,\chi}^{G_F^{m}}$ and the latter is zero because
$\rho_\ell(G_F^{m})=\{id\}$ and $\chi(G_F^{m})\neq\{id\}$.  Thus
in this range the integrand is $\dim V_\ell=\deg(\rho_\ell)$.  On the
other hand, if $s>m$, then $\rho_\ell\tensor\chi(G_F^{s})=\{id\}$
and the integrand is zero.  Thus
 $$\int_{-1}^\infty \codim V_{\ell,\chi}^{G_F^s}\,ds=\deg(\rho_\ell)(m+1)
=\deg(\rho_\ell)a(\chi)$$
as desired.
\end{proof}

A particularly useful case of the proposition occurs when $\rho_\ell$
is tamely ramified and $\chi$ is wildly ramified, e.g., when $\chi$ is
an Artin-Schreier character.

A variant of the proposition where $\chi$ and $\rho_\ell$ are both
assumed to be irreducible, but $\chi$ may be of dimension $>1$, is
stated as Lemma~9.2(3) of \cite{Dokchitsers13}

We end with another application in the same spirit, namely a very
simple solution to Exercise~2 in \cite[p.~103]{SerreLF}.

\begin{prop}
  Suppose that $\rho_\ell$ is an irreducible $\ell$-adic
  representation of $W_F$ on $V_\ell$, and let $m$ be the supremum of
  the set of numbers $s$ such that $\rho_\ell(G^s)\neq0$.  Then
$$a(\rho_\ell)=(\dim V_\ell)(m+1).$$
\end{prop}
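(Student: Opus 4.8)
The plan is to evaluate the integral furnished by Theorem~\ref{thm:main}, namely
$$a(\rho_\ell)=\int_{-1}^\infty\codim V_\ell^{G^s}\,ds ,$$
using irreducibility to pin down the integrand. Fix $s>-1$, so that the ramification subgroup $G^s=G_F^s$ lies in $I_F\subset W_F$ and is normal in $G_F$, hence normal in $W_F$. Then $V_\ell^{G^s}$ is a $W_F$-subrepresentation of $V_\ell$: for $v\in V_\ell^{G^s}$, $w\in W_F$ and $g\in G^s$ one has $\rho_\ell(g)\rho_\ell(w)v=\rho_\ell(w)\rho_\ell(w^{-1}gw)v=\rho_\ell(w)v$, since $w^{-1}gw\in G^s$. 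As $\rho_\ell$ is irreducible, $V_\ell^{G^s}$ is either $0$ or all of $V_\ell$, and it equals $V_\ell$ precisely when $\rho_\ell(G^s)=\{\mathrm{id}\}$. Hence $\codim V_\ell^{G^s}$ equals $\dim V_\ell$ when $\rho_\ell(G^s)\neq\{\mathrm{id}\}$ and $0$ otherwise; the value at the isolated point $s=-1$, where $G^{-1}=G_F$ is not even a subgroup of $W_F$, is immaterial to the integral.

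Next I would describe the set $S=\{s>-1:\rho_\ell(G^s)\neq\{\mathrm{id}\}\}$ on which the integrand is nonzero. Since $s\mapsto G^s$ is weakly decreasing, $S$ is a down-set in $(-1,\infty)$: if $-1<s'<s$ and $s\in S$ then $G^{s'}\supseteq G^s$, so $\rho_\ell(G^{s'})\supseteq\rho_\ell(G^s)\neq\{\mathrm{id}\}$. Thus $S$ is an interval $(-1,m)$ or $(-1,m]$ with $m=\sup S\in[-1,\infty]$, and $m$ is exactly the quantity in the statement (read with the convention $\sup\emptyset=-1$ when $\rho_\ell$ is unramified). Moreover $\int_{-1}^\infty\codim V_\ell^{G^s}\,ds=a(\rho_\ell)$ is finite by Theorem~\ref{thm:main} and the integrand equals $\dim V_\ell>0$ throughout $S$, so $S$ has finite Lebesgue measure $m+1$ and in particular $m<\infty$. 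Therefore
$$a(\rho_\ell)=\int_{-1}^\infty\codim V_\ell^{G^s}\,ds=(\dim V_\ell)\,\mu(S)=(\dim V_\ell)(m+1),$$
as claimed. (Alternatively one sees directly that $m<\infty$: $\rho_\ell$ restricted to $P_F$ has finite image by Section~\ref{s:l-adic}, and for $s>0$ the restriction of $\rho_\ell$ to $G^s\subset P_F$ is isomorphic to that of $\rho_{ss}$, whose restriction to inertia factors through a finite quotient, so $\rho_\ell(G^s)=\{\mathrm{id}\}$ once $s$ is large.)

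This is the same computation as the one yielding $a(\chi)=m+1$ for a character in Section~\ref{s:twist}; the only additional ingredient is the dichotomy $\codim V_\ell^{G^s}\in\{0,\dim V_\ell\}$, which is a form of Clifford's theorem (a subrepresentation of an irreducible representation is either $0$ or everything, applied to the $G^s$-invariants, $G^s$ being normal in $W_F$). I do not expect a genuine obstacle here; the one place calling for a little care is the bookkeeping at the endpoint $s=-1$, where $G^{-1}=G_F$ lies outside $W_F$ and where the definition of $m$ must be read with the convention above in the unramified case — but since $\{-1\}$ has measure zero this affects neither the integral nor the conclusion.
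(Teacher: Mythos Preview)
Your proof is correct and follows essentially the same approach as the paper: use the integral formula of Theorem~\ref{thm:main}, observe that normality of $G^s$ forces $V_\ell^{G^s}$ to be a subrepresentation and hence by irreducibility to be $0$ or $V_\ell$, and integrate the resulting step function. You supply more detail than the paper (the explicit check that $V_\ell^{G^s}$ is $W_F$-stable, the down-set argument, and the verification that $m<\infty$), but the argument is the same.
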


\begin{proof}
  Since $G^s$ is a normal subgroup of $G_F$, the subspace of
  invariants $V_\ell^{G^s}$ is preserved by $G_F$.  Since $V_\ell$ is
  irreducible, we have that $\codim V_\ell^{G^s}$ is 0 if $s>m$ and
  $\dim V_\ell$ if $s<m$.  Therefore
$$a(\rho_\ell)=\int_{-1}^\infty \codim V_\ell^{G_F^s}\,ds=
(\dim V_\ell)(m+1)$$
as desired.
\end{proof}

\bibliography{database}{}
\bibliographystyle{alpha}

\end{document}